\newtheorem{theorem}{Theorem}[subsection]
\newtheorem{corollary}[theorem]{Corollary}
\numberwithin{equation}{subsection}
\theoremstyle{remark}
\newtheorem{remark}[theorem]{Remark}
\theoremstyle{definition}
\def\I{\mathrm{I}}
\def\R{\mathrm{R}}
\def\GL{\mathrm{GL}}
\def\F{\mathbb{F}}
\def\calC{{\mathcal{C}}}
\def\calN{{\mathcal{N}}}
\def\car{\mathfrak{car}}
\def\calH{{\mathcal{H}}}
\def\calD{{\mathcal{D}}}
\def\calS{{\mathcal{S}}}
\def\Q{{\overline{\mathbb{Q}}_\ell}}
\def\B{\mathrm{B}}
\def\pr{\mathrm{pr}}
\def\Ind{\mathrm{Ind}}
\def\Res{\mathrm{Res}}
\def\R{\mathrm{R}}
\def\P{\mathrm{P}}
\def\GL{\mathrm{GL}}
\def\GL{\mathrm {GL}}
\def\B{\mathrm{B}}
\def\Tr{{\rm Tr}}
\begin{document}

\title{Geometrization of the two orthogonality formulas for Green functions}

\author{ G\'erard Laumon
\\ {\it Universit\'e Paris-saclay, LMO CNRS UMR 8628}
\\{\tt gerard.laumon@u-psud.fr } \and Emmanuel Letellier \\ {\it
  Universit\'e Paris Cit\'e, IMJ-PRG CNRS UMR 7586} \\ {\tt
  emmanuel.letellier@imj-prg.fr} }

\pagestyle{myheadings}
\maketitle

\begin{abstract}The Green functions were first introduced by Green to compute the character table of $\GL_n(\F_q)$ in 1955. They were later generalized by Deligne and Lusztig for an arbitrary finite group of Lie type $G(\F_q)$ using $\ell$-adic cohomological methods (1976). They proved that these Green functions satisfy an orthogonality relation (we call the first orthogonality relation). Ten years later Kawanaka proved that they satisfy an other orthogonality relation (we call the second orthogonality relation). In this notes, we explain how the main results of our paper \cite{LL} provide a geometric understanding of these two orthogonality relations and how we can see geometrically that the  two orthogonality relations are in fact equivalent. In this geometric approach the language of stacks is essential.
\end{abstract}

\tableofcontents

\maketitle

\section{Introduction}Let $G$ be a connected reductive algebraic group defined over a finite field $k_o$ with corresponding Frobenius $F:G\rightarrow G$ and denote by $G_{\rm uni}$ the closed subset of unipotent elements of $G$. For any $F$-stable maximal torus $T$ of $G$ with a  Borel subgroup $B=TU$ (not necessarily $F$-stable), Deligne and Lusztig \cite{DLu}\cite{Lu} (see also \cite{DM}) defined   $Q_{T^F}^{G^F}:G_{\rm uni}^F\rightarrow\Q$ (\emph{Green function}) by
$$
Q_{T^F}^{G^F}(u)=\frac{1}{|T^F|}\sum_i(-1)^i\Tr\left(u, H_c^i(\mathcal{L}^{-1}(U),\Q)\right),
$$
where $\mathcal{L}:G\rightarrow G, x\mapsto x^{-1}F(x)$ is the Lang map and where the action of $G^F$ on $H^i(\mathcal{L}^{-1}(U),\Q)$ is induced by the left multiplication of $G^F$ on $\mathcal{L}^{-1}(U)$. 

It is well-known that the Green functions do not depend on the choice of the Borel subgroup $B$ containing $T$. 
\bigskip

The (first) orthogonality formula 

\begin{equation}
\frac{1}{|G^F|}\sum_{u\in G_{\rm uni}^F}Q_{T^F}^{G^F}(u)Q_{T'{^F}}^{G^F}(u)=\frac{|N_{G^F}(T,T')|}{|T^F|\, |T'^{F}|}
\label{ortho1}\end{equation}
where $N_{G^F}(T,T')=\{g\in G^F\,|\, g^{-1}Tg=T'\}$, was proved by Deligne-Lusztig \cite{DLu}.
\bigskip

Fix now a maximally split $F$-stable maximal torus $T$ of $G$ and let $B$ be an $F$-stable Borel subgroup of $G$ containing $T$. Denote by $W=N_G(T)/T$ the Weyl group of $G$ with respect to $T$.  The Frobenius $F$ acts on $W$ and the set $H^1(F,W)$ of $F$-conjugacy classes of $W$ parametrizes the set of $G^F$-conjugacy classes of $F$-stable maximal tori of $G$. For $[w]\in H^1(F,W)$ we will denote by $T_w$ a representative of the corresponding $G^F$-conjugacy class of $F$-stable maximal tori. Then The Frobenius $F$ on $T_w$ corresponds to the Frobenius $wF$ on $T$ and so $T_w^F\simeq T^{wF}$.
\bigskip

In the case where $G=\GL_n$, the second orthogonality formula for Green functions reads, for $u,v\in G_{\rm uni}^F$,

$$
\frac{1}{|W|}\sum_{w\in W}|T_w^F|\, Q_{T_w^F}^{G^F}(u) Q_{T_w^F}^{G^F}(v)=\begin{cases} |C_G(u)^F|&\text{ if }u\text{ is }G^F\text{-conjugate to } v,\\
0&\text{ otherwise.}\end{cases}
$$
\bigskip

\begin{remark}When $G=\GL_n$ and $F$ is the standard Frobenius that raises coefficients of matrices to their $q$-th power, then we can take the maximal torus of diagonal matrices for$T$ and identify $W$ with the symmetric group $S_n$. The Frobenius $F$ acts trivially on $W=S_n$ and so $H^1(F,W)$ is the set of conjugacy classes of $S_n$. Then for $w\in S_n$ and $u\in G_{\rm uni}^F$, we have \cite{Sp1}

\begin{equation}
Q_{T_w^F}^{G^F}(u)=Q^\mu_\rho(q)
\end{equation}
where $\rho$ is the partition of $n$ corresponding to the decomposition of $w$ as a product of disjoint cycles and $\mu$ is the partition given by the size of the Jordan blocks of $u$ and where $Q^\mu_\rho(q)$ is the Green polynomial as defined in \cite{Green}\cite[III, \S 7]{macdonald}. Then the  first orthogonality relation is \cite[III, (7.10)]{macdonald} and the second one is \cite[III, (7.9)]{macdonald}. 
\end{remark}
\bigskip

The second orthogonality relation for Green functions has been generalized by Kawanaka \cite{Kawanaka} to an arbitrary connected reductive group $G$. It is written using the framework of Springer correspondence. We can rephrase it as follows. 
\bigskip

Denote by $\P^F$ the projector on \emph{uniform functions} (see \S\ref{DLu}) and let $\Delta_{\rm uni}^F:G_{\rm uni}^F\times G_{\rm uni}^F\rightarrow \Q$ be the function defined by 

\begin{equation}
\Delta_{\rm uni}^F (u,v)=\begin{cases}|C_G(u)^F|&\text{ if }u\text{ is }G^F-\text{conjugate to }v,\\0&\text{ otherwise.}\end{cases}
\label{Delta}\end{equation}
Then the second orthogonality relation \cite[(1.1.7)]{Kawanaka} reads

\begin{equation}
\frac{1}{|W|}\sum_{w\in W}|T_w^F|\,Q_{T_w^F}^{G^F}(u)\,Q_{T_w^F}^{G^F}(v)=(\P^F\times \P^F)(\Delta_{\rm uni}^F)(u,v)
\label{ortho2}\end{equation}
for all $u,v\in G_{\rm uni}^F$.
\bigskip

The aim of this note is to give a geometrical understanding of the two orthogonality formulas and to show that the two are equivalent in a simple way (which seems to be a new result).
\bigskip

{\bf Acknowledgments:} We thank George Lusztig for pointing out some slighty inaccurate references.

\section{Preliminaries}

In this section we review the results of \cite{LL}. Let $k$ be an algebraic closure of a finite field and let $\ell$ be a prime invertible in $k$. 
\bigskip

We fiw a connected reductive algebraic group over $k$, a maximal torus $T$ of $G$, and we denote by $N$ the normalizer of $T$ in $G$ and by $W$ be the Weyl group $N/T$.
\bigskip

We also choose a Borel subgroup $B=TU$ of $G$ containing $T$.
\bigskip

If $Z$ is a $k$-scheme on which a $k$-algebraic group $H$ acts, we denote by $[Z/H]$ the quotient stack of $Z$ by $H$. The classifying stack of $H$-torsors is then $\B(H):=[{\rm Spec}(k)/H]$. 

An $S$-point of $[Z/H]$, i.e. a $k$-morphism $S\rightarrow [Z/H]$, corresponds to a cartesian diagram 

$$
\xymatrix{E\ar[rr]\ar[d]&&Z\ar[d]\\
S\ar[rr]&&[Z/H]}
$$
where $E\rightarrow S$ is an $H$-torsor and $E\rightarrow Z$ is $H$-equivariant.

\subsection{Lusztig's induction and restriction functors}

We consider the geometric correspondence

$$
\xymatrix{[T/T]&&[B/B]\ar[rr]^p\ar[ll]_q&&[G/G]}
$$
where $q$ is induced by the projection $B\rightarrow T$ and $p$ by the inclusion $B\subset G$ and where the actions considered in the quotients are the conjugation actions. As the conjugation action of $T$ on $T$ is trivial we have $[T/T]\simeq T\times\B(T)$.
\bigskip

\begin{remark}If we put 

$$
X:=\{(x,gB)\in G\times G/B\,|\, g^{-1}xg\in B\}
$$
then $[B/B]$ is isomorphic to $[X/G]$ via the map $B\rightarrow X$, $x\mapsto (x,B)$, and $p$ is just the quotient by $G$ of the first projection $X\rightarrow G$.
\end{remark}
\bigskip

We have the corresponding Lusztig induction functor 

$$
\Ind:\mathcal{D}_c^b([T/T])\rightarrow\mathcal{D}_c^b([G/G]), \hspace{.5cm}K\mapsto p_!q^!(K)
$$
whose left adjoint is the restriction functor

$$
\Res:\mathcal{D}_c^b([G/G])\rightarrow\mathcal{D}_c^b([T/T]),\hspace{.5cm}K\mapsto q_!p^*(K).
$$

\begin{remark}Recall that these two functors preserves perverse sheaves by \cite{BY}. Moreover, the categories of perverse sheaves on $[T/T]$ and $T$ are equivalent by the functor $s^![n](n)$ where $n$ is the rank of $G$ and $s:T\rightarrow[T/T]$ is the quotient map. So in the case of perverse sheaves the functor $\Ind$ coincides with the classical geometric induction from perverse sheaves on $T$ to $G$-equivariant perverse sheaves on $G$ (which is considered in Lusztig's character sheaves theory).
\label{rem1}\end{remark}

We now consider the Stein factorisation of $p$, namely we consider the morphism of correspondences

\begin{equation}
\xymatrix{&&[B/B]\ar[rrd]^p\ar[lld]_q\ar[d]^{(q,p)}&&\\
[T/T]&&\calS:=[T/T]\times_\car[G/G]\ar[rr]^{\pr_2}\ar[ll]_{\pr_1}&&[G/G]}
\label{triangle}\end{equation}
where $\car:=T/\!/W$.

Put 
$$
\calN:=(q,p)_!\Q
$$
From the projection formulas, we see that

$$
\Ind(K)=\pr_{2*}\underline{\rm Hom}\left(\calN,\pr_1^!(K)\right),\hspace{1cm}\Res(K')=\pr_{1!}\left(\pr_2^*(K')\otimes\calN\right)
$$
for $K\in\calD_c^b([T/T])$ and $K'\in\calD_c^b([G/G])$.
\bigskip

It is crucial to notice that the $(q,p)$ is not small unlike the morphism $f:[B/B]\rightarrow T\times_\car [G/G]$ which can be used instead of $(q,p)$ when working with perverse sheaves by Remark \ref{rem1}. In fact the connection between $(q,p)$ and $f$ is as follows.
\bigskip

Denote by $\tau:[B/B]\rightarrow \B(T)$ the map making the following diagram cartesian

$$
\xymatrix{[B/U]\ar[d]\ar[rr]&&{\rm Spec}(k)\ar[d]\\
[B/B]\ar[rr]^\tau&&\B(T)}
$$
Then the map $(q,p)$ decomposes as 

$$
\xymatrix{[B/B]\ar[rr]^-{(\tau,{\rm id}_{[B/B]})}&&\B(T)\times[B/B]\ar[rr]^-{{\rm id}_{\B(T)}\times f}&&\B(T)\times (T\times_\car [G/G])}
$$
The second map is small but the first one is a $T$-torsor.
\bigskip

\begin{remark}As $f$ is a small resolution, the complex $f_!\Q$ is the intersection cohomology complex on $T\times_\car [G/G]$ and so it does not depend on the choice of $B$ and it is equipped with a canonical $W$-equivariant structure. This shows for instance that the induction of $W$-equivariant perverse sheaves on $T$ are equipped with a natural action of $W$ (and does not depend on $B$). Unfortunately this is not true anymore with $(q,p)_!\Q$, as we proved in \cite[\S 5.4]{LL} that this complex is not $W$-equivariant (this also follows from the results of \cite[\S 1.4]{Gun} in a more indirect way).
\label{perverse}\end{remark}

\subsection{The unipotent case}

As the diagram (\ref{triangle}) is above $\car$, it restricts to the unipotent elements and we get 

\begin{equation}
\xymatrix{&&[U/B]\ar[rrd]^{p_{\rm uni}}\ar[lld]_{q_{\rm uni}}\ar[d]^{(q_{\rm uni},p_{\rm uni})}&&\\
\B(T)&&\calS_{\rm uni}:=\B(T)\times [G_{\rm uni}/G]\ar[rr]^-{\pr_2}\ar[ll]_-{\pr_1}&&[G_{\rm uni}/G]}
\label{triangle}\end{equation}

The following result is the main theorem of \cite{LL}.

\begin{theorem}\cite[Theorem 1.1]{LL} The complex

$$
\calN_{\rm uni}:=\calN|_{\calS_{\rm uni}}=(q_{\rm uni},p_{\rm uni})_!\Q
$$
descends to a complex $\overline{\calN}_{\rm uni}$ on $\overline{\calS}:=\B(N)\times [G_{\rm uni}/G]$ (i.e. $\calN_{\rm uni}$ is equipped with a natural $W$-equivariant structure).
\label{maintheoLL}\end{theorem}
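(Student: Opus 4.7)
My plan is to produce the $W$-equivariant structure on $\calN_{\rm uni}$ by exploiting the factorization of $(q,p)$ through the small Grothendieck--Springer map $f:[B/B]\to T\times_\car [G/G]$ displayed in (\ref{triangle}), combined with the classical $W$-action on the Springer sheaf; descent to $\overline{\calS}=\B(N)\times [G_{\rm uni}/G]$ then follows from the fact that $\B(T)\to \B(N)$ is a $W$-Galois cover.

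The first step is to establish a $W$-equivariance at the Grothendieck--Springer level. The map $f$ is small, and over the regular semisimple locus of $T\times_\car [G/G]$ it is an \'etale $W$-Galois cover, as in the classical Grothendieck--Springer picture. Consequently $f_!\Q$ coincides with $\IC(T\times_\car [G/G],\Q)$ and carries a canonical $W$-equivariant structure for the $W$-action on $T\times_\car [G/G]$ coming from its action on $T$. Restricting to the reduced fibre over $1\in\car$, which is $\{1\}\subset T$, the map $f$ becomes the stacky Springer resolution $f_{\rm uni}:[U/B]\to [G_{\rm uni}/G]$, and $f_{{\rm uni},!}\Q$ is the Springer sheaf; it inherits the $W$-action from $f_!\Q$ --- the classical Springer representation realized geometrically. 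Since the $W$-action on the base $[G_{\rm uni}/G]$ is then trivial, this $W$-equivariance takes the form of an internal $W$-action by endomorphisms of $f_{{\rm uni},!}\Q$.

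Next one must transfer this internal $W$-action to $\calN_{\rm uni}$. Using the restricted factorization
$$
(q_{\rm uni},p_{\rm uni})=(\id_{\B(T)}\times f_{\rm uni})\circ(\tau_{\rm uni},\id_{[U/B]}),
$$
we have $\calN_{\rm uni}=(\id_{\B(T)}\times f_{\rm uni})_!(\tau_{\rm uni},\id_{[U/B]})_!\Q$, and the $T$-torsor leg $(\tau_{\rm uni},\id)$ canonically identifies the $\B(T)$-factor of the target with the quotient $T$-torsor $E/U$ attached to a point $(E,\phi)\in [U/B]$. Through this identification the internal $W$-action on $f_{{\rm uni},!}\Q$ becomes a $W$-equivariance of $\calN_{\rm uni}$ with respect to the $W$-action on the $\B(T)$-factor of $\calS_{\rm uni}$, and the desired complex $\overline{\calN}_{\rm uni}$ on $\overline{\calS}$ is then the descent of $\calN_{\rm uni}$ along the $W$-torsor $\B(T)\to\B(N)$.

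The crux, and the main expected obstacle, is this transfer step. Remark \ref{perverse} already warns that the parallel construction fails globally on $\calS$: the Galois $W$-action on the $T$-factor of $T\times_\car [G/G]$ is not matched by the $W$-action on $\B(T)$ through $(\tau,\id)_!\Q$. The obstruction collapses on the unipotent locus essentially because the $T$-factor reduces to the single point $\{1\}$, but showing rigorously that the Springer action then lifts to a genuine $W$-equivariance on $\calN_{\rm uni}$ --- and verifying the cocycle condition ensuring descent to $\overline{\calS}$ --- requires a careful analysis of how $(\tau_{\rm uni},\id)_!\Q$ intertwines the two Weyl group actions. This is where the main technical work of \cite{LL} is invested.
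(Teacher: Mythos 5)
Your proposal correctly identifies the factorization of $(q_{\rm uni},p_{\rm uni})$ through the $T$-torsor $(\tau_{\rm uni},\id)$ and the small stacky Springer map $f_{\rm uni}$, and correctly observes that $f_{\rm uni,!}\Q$ carries the classical Springer $W$-action. But the step you call the "crux" --- converting that internal $W$-action on $f_{\rm uni,!}\Q$ into a $W$-equivariant structure on $\calN_{\rm uni}$ for the nontrivial $W$-action on the $\B(T)$-factor of $\calS_{\rm uni}$ --- is asserted, not proved, and you yourself flag it as the place where the real work lies. As written, this is a strategy outline rather than a proof.

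The gap is not merely one of rigor; the asserted transfer is genuinely unobvious and I do not think it follows from the considerations you give. The Springer action on $f_{\rm uni,!}\Q$ is an action by automorphisms of a single complex on $[G_{\rm uni}/G]$ (the base $W$-action being trivial there), whereas a $W$-equivariant structure on $\calN_{\rm uni}$ over $\B(T)\times[G_{\rm uni}/G]$ consists of coherent isomorphisms $w^*\calN_{\rm uni}\simeq\calN_{\rm uni}$ for the $N/T$-twisting action of $W$ on $\B(T)$; these are different kinds of data and there is no general mechanism converting one into the other. Your remark that $(\tau_{\rm uni},\id)$ "identifies the $\B(T)$-factor with $E/U$" describes the $T$-torsor, but does not produce the required compatibility between the $N/T$-twist of $E/U$ and the Springer automorphism $\theta_w$; checking that compatibility, together with the cocycle condition, is precisely what Theorem~\ref{maintheoLL} asserts. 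Moreover Remark~\ref{perverse} of the paper is a warning sign against exactly this reasoning: for the ambient $\calN=(q,p)_!\Q$ the analogous transfer from the $W$-equivariance of $f_!\Q$ simply fails, and the phrase "the obstruction collapses because the $T$-factor reduces to $\{1\}$" does not explain why it should succeed after restriction, since the $\B(T)$-factor (the one $W$ actually acts on) does not collapse at all on $\calS_{\rm uni}$. The paper itself does not reprove the theorem --- it cites \cite[Theorem~1.1]{LL} --- so you cannot be faulted for not reproducing that argument, but the missing step you identify is the theorem, and a proof must actually construct the equivariance data and verify its coherence rather than gesture at the Springer action.
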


\begin{remark}In particular the complex $\calN_{\rm uni}$ does not depend on the choice of the Borel subgroup $B$ containing $T$.
\end{remark}

We then consider the cohomological correspondence

$$
\xymatrix{\B(N)&&\B(N)\times[G_{\rm uni}/G]\ar[rr]^-{\pr_2}\ar[ll]_-{\pr_1}&&[G_{\rm uni}/G]}
$$
with kernel $\overline{\calN}_{\rm uni}$.
\bigskip

We consider the associated induction and restriction functors

$$
\I_{\rm uni}:\calD_c^b(\B(N))\rightarrow\calD_c^b([G_{\rm uni}/G]),\hspace{.5cm}K\mapsto \pr_{2*}\underline{\rm Hom}\left(\overline{\calN}_{\rm uni},\pr_1^!(K)\right)
$$
and

$$
\R_{\rm uni}:\calD_c^b([G_{\rm uni}/G])\rightarrow\calD_c^b(\B(N)),\hspace{.5cm}K'\mapsto \pr_{1!}\left(\overline{\calN}_{\rm uni}\otimes\pr_2^*(K')\right).
$$
\bigskip

We also proved the following result.

\begin{theorem} \cite[\S 6.3, \S 7.1]{LL} The adjoint morphism 

$$
\R_{\rm uni}\circ\I_{\rm uni}\rightarrow 1
$$
is an isomorphism.

 If moreover $G$ is of type $A$ with connected center then the adjoint morphism $1\rightarrow \I_{\rm uni}\circ\R_{\rm uni}$ is also an isomorphism and so in this case
 
 \begin{equation}
 \calD_c^b(\B(N))\simeq \calD_c^b([G_{\rm uni}/G]).
\label{equiv} \end{equation}
\end{theorem}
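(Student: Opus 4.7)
The plan is to recast the two adjunction morphisms via the kernel formalism for integral transforms. Since $\R_{\rm uni}$ is by construction the left adjoint of $\I_{\rm uni}$, the counit $\R_{\rm uni}\circ\I_{\rm uni}\to 1$ is an isomorphism iff $\I_{\rm uni}$ is fully faithful, and both morphisms are isomorphisms iff $\I_{\rm uni}$ is an equivalence. The theorem therefore reduces to (i) full faithfulness of $\I_{\rm uni}$ in general, and (ii) essential surjectivity when $G$ is of type $A$ with connected center.

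For (i), I would compute the endofunctor $\R_{\rm uni}\circ\I_{\rm uni}$ as convolution with a kernel $\calK$ on $\B(N)\times\B(N)$. Standard base-change and projection-formula manipulations identify $\calK$ with the pushforward to $\B(N)\times\B(N)$ of a complex built from $\overline{\calN}_{\rm uni}\boxtimes\overline{\calN}_{\rm uni}$ along the double fiber product $\B(N)\times_{[G_{\rm uni}/G]}\B(N)$. Lifting along the descent of Theorem \ref{maintheoLL} from $\overline{\calN}_{\rm uni}$ back to $\calN_{\rm uni}$, this double fiber product becomes (up to $W$-torsor factors) the stacky Steinberg variety $[U/B]\times_{[G_{\rm uni}/G]}[U/B]$, whose total Borel--Moore homology is the regular $W\times W$-bimodule $\Q[W]$ by Kazhdan--Lusztig. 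Via the Peter--Weyl decomposition $\Q[W]\cong\bigoplus_\rho\rho\boxtimes\rho^\vee$, this kernel matches the identity kernel on $\B(N)$ (the diagonal pushforward), and a careful tracking of the adjunction then identifies the resulting abstract isomorphism with the counit $\R_{\rm uni}\circ\I_{\rm uni}\to 1$.

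For (ii), I would invoke the theory of unipotent character sheaves. The simple perverse objects in $\calD_c^b([G_{\rm uni}/G])$ are the $G$-equivariant $\IC$-extensions of irreducible equivariant local systems on unipotent conjugacy classes, and by Lusztig's generalized Springer correspondence they organize into series indexed by cuspidal unipotent pairs on Levi subgroups. In type $A$ with connected center, no non-trivial unipotent cuspidal pair exists, so every simple unipotent perverse sheaf occurs as a direct summand of the Springer sheaf $\Sp=\I_{\rm uni}(\Q_{\B(N)})$, and more generally is $\I_{\rm uni}$ applied to the constructible sheaf on $\B(N)$ attached to an irreducible $W$-representation $\rho$. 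Combined with the full faithfulness from (i), this yields the equivalence \eqref{equiv}.

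The main obstacle is step (i): identifying $\calK$ with the identity kernel. Because $(q_{\rm uni},p_{\rm uni})$ is not small (Remark \ref{perverse}), $\calN_{\rm uni}$ is a genuine complex rather than a perverse sheaf, and $\calK$ a priori has cohomology in many degrees beyond the top one detected by Kazhdan--Lusztig. One must therefore verify that the non-top pieces cancel, and that the $W\times W$-bimodule structure carried by $\calK$ (coming from the two independent descents supplied by Theorem \ref{maintheoLL}) is exactly the regular bimodule. This compatibility between the $W$-equivariant descent and the geometry of the stacky Steinberg variety is where the content of Theorem \ref{maintheoLL} is indispensable.
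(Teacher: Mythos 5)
Your reduction to full faithfulness plus essential surjectivity is the right framing, and the geometric picture---a convolution kernel on $\B(N)\times\B(N)$ tied to the stacky Steinberg variety $[U/B]\times_{[G_{\rm uni}/G]}[U/B]$---is the correct one. But the step you yourself call ``the main obstacle'' is the entire content of the theorem, and you have not given an argument for it. The Kazhdan--Lusztig computation identifies the \emph{total Borel--Moore homology} of the Steinberg variety with the regular bimodule $\Q[W]$; that is a statement about a single graded vector space, whereas what you need is an isomorphism of objects in $\calD_c^b(\B(N)\times\B(N))$, and this object a priori has cohomology in many degrees precisely because $(q_{\rm uni},p_{\rm uni})$ is not small. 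Observing that ``the non-top pieces must cancel'' correctly locates the difficulty but does not fill it: that cancellation \emph{is} Theorem~\ref{tech}, which the paper records exactly because its proof in \cite{LL} is the nontrivial input behind the adjunction being an isomorphism. Your sketch re-derives the \emph{statement} of Theorem~\ref{tech} and then proceeds as if it were known.

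There is also a bookkeeping point you elide: $\I_{\rm uni}$ is defined via $\pr_{2*}\underline{\rm Hom}(\overline{\calN}_{\rm uni},\pr_1^!(-))$, not via a tensor kernel, so the kernel of $\R_{\rm uni}\circ\I_{\rm uni}$ is not literally a convolution of $\overline{\calN}_{\rm uni}$ with itself but with a twisted dual; what Theorem~\ref{tech} (equivalently Theorem~\ref{maincoro}) computes directly is $(\R_{\rm uni}\times\R_{\rm uni})(\Delta_{[G_{\rm uni}/G]\,!}\Q)$, and relating this to the adjunction counit requires the ``careful tracking'' you mention but do not perform. Your step (ii) is in the right spirit---the absence of cuspidal unipotent pairs in type $A$ with connected center is indeed the point---but you should make explicit that full faithfulness forces the essential image to be a triangulated subcategory, and that containing all simple equivariant perverse sheaves on $[G_{\rm uni}/G]$ then gives essential surjectivity.
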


The equivalence (\ref{equiv}) was previously proved by L. Rider \cite{Rider} by a different approach by interpreting $\calD_c^b(\B(N))$ as the derived category of finitely generated dg
modules over the smash product algebra $\Q[W]\# H^\bullet_G(G/B)$.

In the course of proving the above theorem, we proved the following one \cite[Theorem 6.2]{LL}.

\begin{theorem}We have a natural isomorphism

$$
\pr_{\B(N),\B(N)!}\left(\pr_{12}^*\overline{\calN}_{\rm uni}\otimes\pr_{23}^*\overline{\calN}_{\rm uni}\right)\simeq \Delta_{\B(N)\,!}\Q
$$
where $\Delta_{\B(N)}:\B(N)\rightarrow\B(N)\times\B(N)$ is the diagonal morphism,
$$
\pr_{\B(N),\B(N)}:\B(N)\times[G_{\rm uni}/G]\times \B(N)\rightarrow\B(N)\times\B(N)
$$
and $\pr_{12}$ (resp. $\pr_{23}$) is the projection on the first two coordinates 

$$
\B(N)\times[G_{\rm uni}/G]\times\B(N)\rightarrow \B(N)\times[G_{\rm uni}/G]
$$
(resp. on the last two coordinates).
\label{tech}\end{theorem}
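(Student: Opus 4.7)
The plan is to reduce the computation to the stacky Steinberg variety, via descent along the étale $(W\times W)$-cover $\B(T)\times\B(T)\to\B(N)\times\B(N)$. Since $\overline{\calN}_{\rm uni}$ descends from $\calN_{\rm uni}=(q_{\rm uni},p_{\rm uni})_{!}\Q$ by Theorem~\ref{maintheoLL}, both sides of the claimed isomorphism are $(W\times W)$-equivariant, so it suffices to verify the identity after pullback to $\B(T)\times\B(T)$. The pullback of the RHS $\Delta_{\B(N)\,!}\Q$ is naturally $\bigoplus_{w\in W}(\Delta_{w})_{!}\Q$, where $\Delta_{w}\colon\B(T)\to\B(T)\times\B(T)$ is the graph of the automorphism $t\mapsto w(t)$. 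On the other hand, by proper base change and the projection formula applied to the cartesian square presenting $\pr_{12}^{*}\calN_{\rm uni}\otimes\pr_{23}^{*}\calN_{\rm uni}$, the pullback of the LHS identifies with $\tilde\pi_{!}\Q$ for
$$Z := [U/B]\times_{[G_{\rm uni}/G]}[U/B]$$
the stacky Steinberg variety and $\tilde\pi\colon Z\to\B(T)\times\B(T)$ the map induced by the two copies of $q_{\rm uni}$.

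Next I would Bruhat-stratify $Z=\bigsqcup_{w\in W}Z_{w}$ by the relative position of the two Borel subgroups. Choosing Weyl representatives $\dot w\in N$, each stratum identifies with
$$Z_{w}\simeq\bigl[(U\cap \dot w U\dot w^{-1})\big/(B\cap \dot w B\dot w^{-1})\bigr],$$
and the Levi decomposition $B\cap \dot w B\dot w^{-1}=T\cdot(U\cap\dot w U\dot w^{-1})$ shows that $\tilde\pi|_{Z_{w}}$ factors as a fibration over $\Delta_{w}(\B(T))$ with fibers of the form $[V/V]$, where $V=U\cap\dot w U\dot w^{-1}$ is a connected unipotent group. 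A filtration of $V$ by $\mathbb{G}_{a}$-subgroups and the resulting Leray spectral sequence show that the $!$-cohomology of $[V/V]$ is $\Q$ concentrated in a single degree (with the expected Tate twist cancelling against the shifts built into $\tilde\pi$), whence $(\tilde\pi|_{Z_{w}})_{!}\Q\simeq(\Delta_{w})_{!}\Q$ for each $w$.

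Assembling the strata via the standard recollement triangles then yields $\tilde\pi_{!}\Q\simeq\bigoplus_{w\in W}(\Delta_{w})_{!}\Q$, which matches the pullback of $\Delta_{\B(N)\,!}\Q$; descent along the $(W\times W)$-cover recovers the claimed isomorphism on $\B(N)\times\B(N)$. The main obstacle is precisely this assembly step: because $(q_{\rm uni},p_{\rm uni})$ is not small (Remark~\ref{perverse}), the decomposition theorem is not directly available to split the strata contributions, so one must independently verify the purity of each $Z_{w}$ and the degeneration of the stratification spectral sequence in order to rule out nontrivial extensions between the pieces. Equivalently, this is where one expects the $W$-equivariant structure provided by Theorem~\ref{maintheoLL} to do real work: it is what organizes the various $(\Delta_{w})_{!}\Q$ into a single sheaf on $\B(N)\times\B(N)$ and prevents them from mixing.
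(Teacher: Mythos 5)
Your outline is a plausible reduction, and the local computations are right: after pulling back to $\B(T)\times\B(T)$, the LHS does become $\tilde\pi_!\Q$ for the stacky Steinberg variety $Z=[U/B]\times_{[G_{\rm uni}/G]}[U/B]$, the RHS does become $\bigoplus_{w}(\Delta_w)_!\Q$ (the $w$-twisted graphs), and on each Bruhat stratum $Z_w\simeq[(U\cap\dot wU\dot w^{-1})/(B\cap\dot wB\dot w^{-1})]$ the $!$-pushforward is indeed $(\Delta_w)_!\Q$, by the cancellation of the compactly supported cohomology of $V$ against that of $\B V$ for $V$ connected unipotent. So the associated graded of $\tilde\pi_!\Q$ for the Bruhat filtration is the expected sum.

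However, the step you flag as the "main obstacle" is a genuine gap, not a technicality, and the proposal as written does not close it. First, the $\ext$ groups obstructing the splitting of the recollement filtration are really nonzero: for $w\neq w'$, $\ext^{*}_{\B(T)\times\B(T)}\bigl((\Delta_w)_!\Q,(\Delta_{w'})_!\Q\bigr)$ is computed, up to a shift and twist, from the cohomology of $\B\bigl(T^{w^{-1}w'}\bigr)$, which is large; the bare fact that each graded piece is "pure" in some sense does not rule out extensions here, and the decomposition theorem is unavailable precisely because $(q_{\rm uni},p_{\rm uni})$ is not small. Without a concrete mechanism (a convolution/unit argument for the kernel $\overline{\calN}_{\rm uni}$, a cellular-fibration-type lemma, or an explicit contracting $\mathbb{G}_m$-action splitting the filtration) the isomorphism $\tilde\pi_!\Q\simeq\bigoplus_{w}(\Delta_w)_!\Q$ is unproven. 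Second, the descent reduction is not quite "both sides are equivariant so it suffices to check on the cover": to descend you must produce an isomorphism on $\B(T)\times\B(T)$ that is compatible with the $(W\times W)$-descent data on both sides, and the Bruhat stratification and the choice of representatives $\dot w$ are not visibly equivariant, so this compatibility has to be built into the construction rather than invoked at the end. In short, the two places you identify as needing work are exactly the content of the theorem, and the argument as presented transfers rather than resolves the difficulty.
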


\section{Geometrization of the orthogonality formulas}

\subsection{Reformulation of Theorem \ref{tech}}

Theorem \ref{tech} can be rephrased as follows.

\begin{theorem}We have an isomorphism 

\begin{equation}
(\R_{\rm uni}\times \R_{\rm uni})(\Delta_{[G_{\rm uni}/G]\, !}\Q)\simeq \Delta_{\B(N)\, !}\Q
\label{geoortho1}\end{equation}
where $\Delta_{[G_{\rm uni}/G]}:[G_{\rm uni}/G]\rightarrow[G_{\rm uni}/G]\times[G_{\rm uni}/G]$ denotes the diagonal morphism.
\label{maincoro}\end{theorem}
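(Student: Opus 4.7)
The plan is to unfold $\R_{\rm uni}\times\R_{\rm uni}$ as the integral transform with kernel $\overline{\calN}_{\rm uni}\boxtimes\overline{\calN}_{\rm uni}$, apply proper base change to $\pr_{23}^{*}\Delta_{[G_{\rm uni}/G]\,!}\Q$, and then recognise the resulting complex as the left-hand side of Theorem \ref{tech}.

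Concretely, I would work on
$$Y:=\B(N)\times[G_{\rm uni}/G]\times[G_{\rm uni}/G]\times\B(N),$$
writing $\pr_{ij}$ for the projection to the $(i,j)$-th factors and $\pr_{14}$ for the projection to $\B(N)\times\B(N)$. Unwinding the definition of $\R_{\rm uni}$ and of the external product of kernels gives
$$(\R_{\rm uni}\times\R_{\rm uni})(L)\;\simeq\;(\pr_{14})_{!}\bigl(\pr_{12}^{*}\overline{\calN}_{\rm uni}\otimes\pr_{34}^{*}\overline{\calN}_{\rm uni}\otimes\pr_{23}^{*}L\bigr)$$
for any $L\in\calD_c^b([G_{\rm uni}/G]\times[G_{\rm uni}/G])$; the two copies of $\overline{\calN}_{\rm uni}$ are pulled back from $\B(N)\times[G_{\rm uni}/G]$ and $[G_{\rm uni}/G]\times\B(N)$ respectively. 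Specialising to $L=\Delta_{[G_{\rm uni}/G]\,!}\Q$ and using that the diagonal $\Delta_{[G_{\rm uni}/G]}$ is a closed immersion (hence proper, as $[G_{\rm uni}/G]$ is separated), proper base change yields $\pr_{23}^{*}\Delta_{[G_{\rm uni}/G]\,!}\Q\simeq\iota_{!}\Q$, with
$$\iota:\B(N)\times[G_{\rm uni}/G]\times\B(N)\hookrightarrow Y$$
the closed immersion induced by the diagonal on the two middle factors. Substituting and applying the projection formula for $\iota$ collapses the above to
$$(\R_{\rm uni}\times\R_{\rm uni})(\Delta_{[G_{\rm uni}/G]\,!}\Q)\;\simeq\;(\pr_{14}\circ\iota)_{!}\bigl(\iota^{*}\pr_{12}^{*}\overline{\calN}_{\rm uni}\otimes\iota^{*}\pr_{34}^{*}\overline{\calN}_{\rm uni}\bigr).$$

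A direct inspection of the diagram identifies $\pr_{14}\circ\iota$ with the projection $\pr_{\B(N),\B(N)}$ of Theorem \ref{tech}, and the two pullbacks $\iota^{*}\pr_{12}^{*}\overline{\calN}_{\rm uni}$, $\iota^{*}\pr_{34}^{*}\overline{\calN}_{\rm uni}$ with $\pr_{12}^{*}\overline{\calN}_{\rm uni}$, $\pr_{23}^{*}\overline{\calN}_{\rm uni}$ in the notation of that theorem. The right-hand side is therefore precisely the complex that Theorem \ref{tech} identifies with $\Delta_{\B(N)\,!}\Q$. The proof is thus a purely formal reduction to Theorem \ref{tech} via base change and the projection formula; the substantive geometric work has already been done, and the only care needed here is in the bookkeeping of the projections on $Y$ and the verification that the diagonal of $[G_{\rm uni}/G]$ is proper.
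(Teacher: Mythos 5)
Your proof is correct and follows essentially the same route as the paper's: rewrite $\R_{\rm uni}\times\R_{\rm uni}$ as an integral transform on the four-fold product, use base change to replace the pullback of $\Delta_{[G_{\rm uni}/G]\,!}\Q$ by the $!$-extension along the inclusion of $\B(N)\times[G_{\rm uni}/G]\times\B(N)$, apply the projection formula, and match the result against Theorem \ref{tech}. The only differences are cosmetic (your factor ordering $\B(N)\times[G_{\rm uni}/G]\times[G_{\rm uni}/G]\times\B(N)$ versus the paper's $\B(N)\times[G_{\rm uni}/G]\times\B(N)\times[G_{\rm uni}/G]$, and your explicit remark on properness of the diagonal, which is not strictly needed since base change for $!$-pushforward is available for the relevant cartesian square of stacks regardless).
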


\begin{proof}We consider the following diagram

$$
\xymatrix{&\B(N)\times[G_{\rm uni}/G]\times\B(N)\ar[r]^-\gamma\ar[ld]_-\delta\ar[d]^\eta&[G_{\rm uni}/G]\ar[d]^{\Delta_{[G_{\rm uni}/G]}}\\
\B(N)\times\B(N)&\B(N)\times[G_{\rm uni}/G]\times\B(N)\times[G_{\rm uni}/G]\ar[r]^-\alpha\ar[l]_-\beta&[G_{\rm uni}/G]\times[G_{\rm uni}/G]}
$$
where $\eta=1_{\B(N)\times\B(N)}\times\Delta_{[G_{\rm uni}/G]}$ and the non-labelled maps are the obvious projections. Notice that the right square is cartesian.

We have (base change)

\begin{align*}
(\R_{\rm uni}\times\R_{\rm uni})(\Delta_{[G_{\rm uni}/G]\,!}\Q)&\simeq \beta_! \left(\alpha^*(\Delta_{[G_{\rm uni}/G]\,!}\Q)\otimes(\overline{\calN}_{\rm uni}\boxtimes\overline{\calN}_{\rm uni})\right)\\
&\simeq \beta_!\left(\eta_!\Q\otimes(\overline{\calN}_{\rm uni}\boxtimes\overline{\calN}_{\rm uni})\right).
\end{align*}

By the projection formula we thus get

\begin{align*}
(\R_{\rm uni}\times\R_{\rm uni})(\Delta_{[G_{\rm uni}/G]\,!}\Q)&\simeq \beta_!\circ \eta_!\circ \eta^*(\overline{\calN}_{\rm uni}\boxtimes \overline{\calN}_{\rm uni})\\
&\simeq \delta_!\circ\eta^*(\overline{\calN}_{\rm uni}\boxtimes \overline{\calN}_{\rm uni}).
\end{align*}
We conclude by Theorem \ref{tech}.

\end{proof}

We will  prove that the isomorphism of Theorem \ref{maincoro} is a geometric version of the first orthogonality formula for Green functions, namely if we assume that $T$ and $G$ are defined over a finite subfield $k_o$ of $k$, then the above complexes are naturally equipped with a Weil structure  and taking the characteristic functions yields the first orthogonality relation (\ref{ortho1}).
\bigskip

We introduce the functor $\P_{\rm uni}:=\I_{\rm uni}\circ\R_{\rm uni}:\calD_c^b([G_{\rm uni}/G])\rightarrow\calD_c^b([G_{\rm uni}/G])$. We will see that this a geometrization of the projector on uniform functions $\P^F$  which appear in the second orthogonality formula.   We will then prove that the isomorphism 

\begin{equation}
(\P_{\rm uni}\times\P_{\rm uni})(\Delta_{[G_{\rm uni}/G]\, !}\Q)\simeq (\I_{\rm uni}\times\I_{\rm uni})(\Delta_{\B(N)\, !}\Q)
\label{geoortho2}\end{equation}
obtained from (\ref{geoortho1}) by applying the functor $\I_{\rm uni}\times\I_{\rm uni}$ is a geometric version of the second orthogonality formula (\ref{ortho2}).
\bigskip

This will prove that the two orthogonality formulas are equivalent as we can recover the first one from the second one by applying $\R_{\rm uni}\times\R_{\rm uni}$ (using that $\R_{\rm uni}\circ\I_{\rm uni}\simeq 1$).

\subsection{Rational points of quotient stacks}

Assume first that $Z$ is a finite set on which a finite group $H$ acts on the right. Then we denote by $[Z/H]$ the category of $H$-equivariant maps $H\rightarrow Z$ for the right translation of $H$ on itself and let $\overline{[Z/H]}$ be the set of isomorphism classes of $[Z/H]$. A function on $[Z/H]$ is by definition a function on $\overline{[Z/H]}$ and we denote by $\calC([Z/H])$ the space of $\Q$-valued functions on $[Z/H]$. Notice that $\calC([Z/H])$ can be identified with the space of $\Q$-valued functions on $Z$ which are $H$-invariant. In particular $\calC([\bullet/H])=\Q$.
\bigskip

Now assume that $Z$ is a $k$-scheme endowed with an action of $k$-algebraic group $H$. We assume that $Z$, $H$ and the action of $H$ on $Z$ are defined over a finite subfield $k_o$ of $k$ with $k_o$-structures $Z_o$ and $H_o$. The quotient stack $[Z/H]$ is then also defined over $k_o$ and we denote by $F$ the corresponding geometric Frobenius on $Z$, $H$ and $[Z/H]$. 
\bigskip

A $k_o$-point of $[Z/H]$ is given by a cartesian diagram

$$\xymatrix{E_o\ar[rr]\ar[d]&&Z_o\ar[d]\\
{\rm Spec}(k_o)\ar[rr]^{\tau}&&[Z_o/H_o]}
$$
where the map $E_o\rightarrow{\rm Spec}(k_o)$ is an $H_o$-torsor and the map  $E_o\rightarrow Z_o$ is $H_o$-equivariant.
\bigskip

We denote by $[Z/H]^F$ the groupo\"id of $k_o$-points of $[Z/H]$.
\bigskip

Since the isomorphism classes of $H_o$-torsors on ${\rm Spec}(k_o)$ are parametrized by the set $H^1(F,H)=H^1(F,H/H^o)$ of $F$-conjugacy classes on $H$, we have the decomposition

$$
[Z/H]^F=\coprod_{[h]\in H^1(F,H)}[Z^{F\circ h}/H^{F\circ h}]
$$
where $h\in H$ denotes a representative of $[h]$ and $H^{F\circ h}=\{g\in H\,|\, F(h^{-1}gh)=g\}$.
\bigskip

If $H$ is connected then $H^1(F,H)$ is trivial and 

$$
[Z/H]^F=[Z^F/H^F].
$$

\subsection{Deligne-Lusztig induction on $[T/N]^F$}\label{DLu}

We assume that $G$, $T$, $B$  are defined over a finite subfield $k_o$ of $k$ and we denote by $F$ the Frobenius endomorphism on $G$ and $T$. 
\bigskip

Then $[G/G]^F=[G^F/G^F]$ and the space $\calC([G/G]^F)$ is just the space of functions $G^F\rightarrow\Q$ which are constant on conjugacy classes. 
\bigskip

We have 

$$
[T/N]^F=\coprod_{[n]\in H^1(F,N)}[T^{F\circ n}/N^{F\circ n}]
$$
and so

$$
\calC([T/N]^F)=\bigoplus_{[n]\in H^1(F,N)}\calC([T^{F\circ n}/N^{F\circ n}]).
$$
\begin{remark}Notice that 
$$
\calC([T/N]^F)=\calC([T/W]^F)=\bigoplus_{[w]\in H^1(F,W)}\calC([T^{F\circ w}/W^{F\circ w}]),
$$
and $\calC([T^{F\circ w}/W^{F\circ w}])$ can be identified with the subspace of $W^{F\circ w}$-invariant functions on $T^{F\circ w}$.
\end{remark}

Following \cite[\S 3.3]{LL1} we define the induction
$$
\I^F:\calC([T/N]^F)\rightarrow\calC([G/G]^F),\hspace{.5cm} f=(f_{[w]})_{[w]}\mapsto \sum_{[w]\in H^1(F,W)}\frac{1}{|W^{F\circ w}|} \, R_{T^{F\circ w}}^{G^F}(f_{[w]})
$$
where $f_w$ is the $[w]$-coordinate of $f$ and $R_{T^{F\circ w}}^{G^F}:\calC(T^{F\circ w})\rightarrow\calC(G^F)$ is the Deligne-Lusztig induction \cite{DLu}.
\bigskip

\begin{remark}Recall that, by definition,  the Green function $Q_{T_w^F}^{G^F}$ is the restriction of $R_{T_w^F}^{G^F}(1)$, where $1$ is the trivial character of $T_w^F$, to unipotent elements.
\end{remark}
\bigskip

We also define the restriction

$$
\R^F:\calC([G/G]^F)\rightarrow\calC([T/N]^F),\hspace{.5cm} h\mapsto \sum_{[w]\in H^1(F,W)}{^*}R^{G^F}_{T^{F\circ w}}(h)
$$
where ${^*}R^{G^F}_{T^{F\circ w}}$ is the Deligne-Lusztig restriction.
\bigskip

\begin{remark}From\cite[10.1.2]{DM} if $f$ is a unipotently supported function on $G_{\rm uni}^F$ then ${^*}R^{G^F}_{T_w^F}(f)$ is supported at $1\in T_w^F$ and

\begin{equation}
{^*}R^{G^F}_{T_w^F}(f)(1)=\frac{|T_w^F|}{|G^F|}\sum_{u\in G_{\rm uni}^F}Q_{T_w^F}^{G^F}(u)\, f(u).
\label{LR}\end{equation}

\end{remark}

Define $\calC([G/G]^F)_{\rm unif}$, called the space of uniform functions,  as the subspace of $\calC([G/G]^F)$ generated by the Deligne-Lusztig characters $R_{T^{F\circ w}}^{G^F}(\theta)$. 

Then $\P^F:=\R^F\circ \I^F:\calC([G/G]^F)\rightarrow\calC([G/G]^F)_{\rm unif}$ is the projector on uniform functions while $\R^F\circ\I^F:\calC([T/N]^F)\rightarrow\calC([T/N]^F)$ is the identity.

\subsection{Orthogonality relations}

In \cite[Theorem 6.2.3]{LL1}, we show that the maps $\I$ and $\R$ have a geometrical counterpart between categories of perverse sheaves but not at the level of derived categories because of Remark \ref{perverse}. However if we restrict ourselves to unipotent elements, then we are going to see in the next theorem that the maps

$$
\I^F_{\rm uni}:\calC(\B(N)^F)\rightarrow\calC([G_{\rm uni}/G]^F),\hspace{1cm}\R^F_{\rm uni}:\calC([G_{\rm uni}/G]^F)\rightarrow\calC(\B(N)^F)
$$
obtained from $\I^F$ and $\R^F$ by restriction to unipotently supported functions do correspond to the functors 

$$
\I_{\rm uni}:\calD_c^b(\B(N))\rightarrow\calC([G_{\rm uni}/G]),\hspace{1cm}\R_{\rm uni}:\calD_c^b([G_{\rm uni}/G])\rightarrow\calD_c^b(\B(N))
$$
defined earlier. 
\bigskip

\noindent Recall that

$$
\B(N)^F=\coprod_{[n]\in H^1(F,N)}[\bullet/N^{F\circ n}]
$$
and so $\calC(\B(N)^F)$ is just the space of $\Q$-valued functions on  $H^1(F,N)=H^1(F,W)$.

Then if we denote by $1_{[w]}$ the function that takes the value $1$ at $[w]$ and $0$ elsewhere then

$$
\I_{\rm uni}^F(1_{[w]})=Q_{T_{F(w)}^F}^{G^F}.
$$

If $\mathcal{Z}$ is an algebraic stack defined over $k_o$ with geometric Frobenius $F:\mathcal{Z}\rightarrow \mathcal{Z}$ then an $F$-equivariant complex on $\mathcal{Z}$ is a pair $(K,\varphi)$ with $K\in\calD_c^b(\mathcal{Z})$ and $\varphi:F^*K\simeq K$. We denote by $\calD_c^b(\mathcal{Z};F)$ the category of $F$-equivariant complexes on $\mathcal{Z}$ and we denote by ${\bf X}:\calD_c^b(\mathcal{Z};F)\rightarrow\calC(\mathcal{Z}^F)$ the map which sends $(K,\varphi)$ to its characteristic function $
{\bf X}_{(K,\varphi)}:x\mapsto \sum_i(-1)^i\Tr(\varphi^i_x, \calH^i_xK)$.
\bigskip

\begin{theorem}Then the following diagrams are commutative

$$
\xymatrix{\calD_c^b(\B(N);F)\ar[rr]^{\I_{\rm uni}}\ar[d]_{\bf X}&&\calD_c^b([G_{\rm uni}/G];F)\ar[d]^{\bf X}\\
\calC(\B(N)^F)\ar[rr]^{\I^F_{\rm uni}}&&\calC([G_{\rm uni}/G]^F)} \hspace{0.5cm}  \xymatrix{\calD_c^b([G_{\rm uni}/G];F)\ar[rr]^{\R_{\rm uni}}\ar[d]_{\bf X}&&\calD_c^b(\B(N);F)\ar[d]^{\bf X}\\
\calC([G_{\rm uni}/G]^F)\ar[rr]^{\R^F_{\rm uni}}&&\calC(\B(N)^F)}
$$
\label{comp}\end{theorem}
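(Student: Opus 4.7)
The plan is to reduce the commutativity of both diagrams to two ingredients: the standard compatibility of the trace map ${\bf X}$ with the six operations on algebraic stacks (Behrend's stacky Grothendieck-Lefschetz formalism), and an explicit computation identifying the characteristic function of the kernel $\overline{\calN}_{\rm uni}$ with the Green function kernel.

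First I would record the formalism. For a morphism $f:\calX\to\calY$ of finite-type stacks over $k_o$, the operator ${\bf X}$ intertwines $f_!$, $f^*$, $f^!$, $\otimes$ and $\underline{\mathrm{Hom}}$ with their function-theoretic counterparts, where $f_!$ on a function $\varphi$ is given at $y\in\calY^F$ by the weighted sum $\sum_{x\in f^{-1}(y)}|\mathrm{Aut}(x)|^{-1}\varphi(x)$ over isomorphism classes in the fibre groupoid. Applied to the defining formulas
$$
\I_{\rm uni}(K)=\pr_{2*}\underline{\mathrm{Hom}}\bigl(\overline{\calN}_{\rm uni},\pr_1^!K\bigr),\qquad \R_{\rm uni}(K')=\pr_{1!}\bigl(\overline{\calN}_{\rm uni}\otimes\pr_2^*K'\bigr),
$$
the commutativity of both diagrams then reduces to showing that ${\bf X}_{\overline{\calN}_{\rm uni}}$ is, up to the appropriate normalization by stacky automorphism groups, the Green function kernel $([w],u)\mapsto Q^{G^F}_{T_w^F}(u)$ on $\B(N)^F\times[G_{\rm uni}/G]^F$.

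Second, I would carry out this kernel computation. Since $\overline{\calN}_{\rm uni}$ descends from $\calN_{\rm uni}=(q_{\rm uni},p_{\rm uni})_!\Q$ via the $W$-equivariant structure of Theorem \ref{maintheoLL}, the value of ${\bf X}_{\overline{\calN}_{\rm uni}}$ at an $F$-fixed point of $\B(N)$ of type $[w]\in H^1(F,W)$ is computed, via proper base change along $[U/B]\to\B(T)\times[G_{\rm uni}/G]$ and the trace formula, as the alternating trace of the twisted Frobenius $wF$ on the cohomology of the fibre. This fibre is exactly (the appropriate quotient of) the Deligne-Lusztig variety $\mathcal{L}^{-1}(U)$ used to define $Q^{G^F}_{T_w^F}(u)$, so the output is a scalar multiple of $Q^{G^F}_{T_w^F}(u)$. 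Dividing by the stacky automorphism factor $|N^{F\circ n}|=|T^{F\circ w}|\cdot|W^{F\circ w}|$ coming from the $\pr_1$ pushforward on $\B(N)^F$ then reproduces exactly the $\frac{1}{|W^{F\circ w}|}$ weighting in $\I^F_{\rm uni}$, and the $\pr_2$ pushforward over $[G_{\rm uni}^F/G^F]$ reproduces the weighted sum in formula $(\ref{LR})$ for $\R^F_{\rm uni}$.

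The main obstacle is the second step: one has to track carefully how the $W$-equivariant descent from $\B(T)$ to $\B(N)$ interacts with the twisting of Frobenius by representatives $n\in N$, so that evaluating ${\bf X}_{\overline{\calN}_{\rm uni}}$ at the $F$-fixed point indexed by $[w]\in H^1(F,W)$ genuinely corresponds to computing ${\bf X}$ of $\calN_{\rm uni}$ with respect to the twisted Frobenius $wF$ on $\B(T)$, with $\mathcal{L}^{-1}(U)$ and its $T_w^F$-action emerging as the relevant fibre. Once this identification is in place, the classical trace calculation and the normalization bookkeeping yield both commutative diagrams simultaneously.
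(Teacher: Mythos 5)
You have the right overall skeleton: reduce both diagrams to the compatibility of ${\bf X}$ with the six operations, and then identify the characteristic function of the kernel $\overline{\calN}_{\rm uni}$ with the Green function kernel $([w],u)\mapsto Q^{G^F}_{T_w^F}(u)$. The normalization bookkeeping you sketch (the factor $|N^{F\circ n}|=|T^{F\circ w}|\,|W^{F\circ w}|$ matching the $1/|W^{F\circ w}|$ weighting in $\I^F_{\rm uni}$, and the groupoid sum over $[G_{\rm uni}^F/G^F]$ matching formula (\ref{LR})) is also essentially what the paper does. The paper arrives at the same bottleneck by a slightly different reduction: it first writes $\I_{\rm uni}(\overline K)$ as the $W$-invariants $\Ind_{\rm uni}(K)^W$, applies the averaging formula \cite[Formula~(6.2)]{LL1} for the characteristic function of a $W$-invariant part, and thereby reduces to the single pointwise identity (\ref{Lus}). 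Your kernel-based route is more symmetric (it treats $\I$ and $\R$ simultaneously), which is a reasonable organizational choice.

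The genuine gap is in your second step, where you claim that the fibre of $[U/B]\to\B(T)\times[G_{\rm uni}/G]$ over a point of type $([w],u)$ is ``(the appropriate quotient of) the Deligne--Lusztig variety $\mathcal L^{-1}(U)$'', so that proper base change and the trace formula directly return $Q^{G^F}_{T_w^F}(u)$. This is not so. The fibre in question is a quotient of the \emph{Springer fibre} $\{gB\in G/B\,:\,g^{-1}ug\in U\}$; its cohomology, with the twisted Frobenius action, gives what the paper calls the ``geometric Green function'' (Springer's and Lusztig's cohomological definition). The Deligne--Lusztig Green function is defined via the Lang torsor $\mathcal L^{-1}(U)$ over the finite field, a completely different variety. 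The statement that these two quantities coincide is not a base-change computation: it is the theorem of Kazhdan (large characteristic) and Lusztig \cite{LusGreen} (in general), and it is exactly the input the paper isolates as equation~(\ref{Lus}) and cites explicitly. As written, your argument silently assumes this theorem by mislabelling the fibre; to repair it you must instead invoke \cite{LusGreen} (or \cite{Ka}) at the point where you pass from the alternating trace on the Springer-fibre cohomology to $Q^{G^F}_{T_w^F}(u)$.
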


\begin{proof}This reduces essentially to the main result of \cite{LusGreen}. Denote by $\Ind_{\rm uni}:\calD_c^b(\B(T))\rightarrow\calD_c^b([G_{\rm uni}/G])$ the functor induced from the cohomological correspondence (\ref{triangle}) with kernel $\calN_{\rm uni}$ on $\calS_{\rm uni}$ (it is the restriction of ${\rm Ind}$ to unipotent elements).
\bigskip

As $\calN_{\rm uni}$ is $W$-equivariant by Theorem \ref{maintheoLL}, any $W$-equivariant structure $\theta$ on a complex $K$ on $\B(T)$ provides a group homomorphism $\theta^G:W\rightarrow{\rm Aut}(\Ind_{\rm uni}(K))$ and so the $W$-invariant part  $\Ind_{\rm uni}(K,\theta)^W$ of $\Ind_{\rm uni}(K)$ is well-defined \cite[\S 2.4]{LL}. Recall also that a $W$-equivariant complex $(K,\theta)$ on $\B(T)$ descends to a unique complex $\overline{K}$ on $\B(N)$ (i.e. $(K,\theta)$ is the pull back of $\overline{K}$ along the $W$-torsor $\B(T)\mapsto \B(N)$). Then \cite[Remark 2.17]{LL},

$$
\I_{\rm uni}(\overline{K})=\Ind_{\rm uni}(K)^W.
$$
If moreover $K$ is equipped with a Weil structure $\varphi:F^*(K)\simeq K$ compatible with the $W$-equivariant structure (see \cite[Diagram (6.1)]{LL1}), then $\overline{K}$, $\I_{\rm uni}(\overline{K})$, $\Ind_{\rm uni}(K)$ and $\Ind_{\rm uni}(K)^W$ are equipped with canonical Weil structures denoted abusively by $\varphi$ \cite[\S 6.1]{LL1} and by \cite[Formula (6.2)]{LL1} we have

$$
{\bf X}_{\I_{\rm uni}(\overline{K}),\varphi}={\bf X}_{\Ind_{\rm uni}(K)^W,\varphi}=\frac{1}{|W|}\sum_{w\in W}{\bf X}_{\Ind_{\rm uni}(K),\theta_w^G\circ\varphi}.
$$
Now the datum $(K,\theta,\varphi)$ provides for any $[w]\in H^1(F,W)$ an $F\circ w$-equivariant structure $\varphi_w:(F\circ w)^*K\simeq K$ and 

$$
{\bf X}_{\overline{K},\varphi}([w])={\bf X}_{K,\varphi_w}(\bullet)
$$
and so 

$$
\I_{\rm uni}^F\left({\bf X}_{\overline{K},\varphi}\right)=\frac{1}{|W|}\sum_{w\in W}R_{T^{F\circ w}}^{G^F}\left({\bf X}_{K,\varphi_w}\right).
$$
We are thus reduced to see that

$$
R_{T^{F\circ w}}^{G^F}\left({\bf X}_{K,\varphi_w}\right)={\bf X}_{\Ind_{\rm uni}(K),\theta_w^G\circ\varphi}.
$$
The $F\circ w$-equivariant complex $(K,\varphi_w)$ can be transfered to an $F$-equivariant complex $(K_w,\phi_w)$ on $\B(T_{F(w)})$ and we have

$$
R_{T^{F\circ w}}^{G^F}\left({\bf X}_{K,\varphi_w}\right)=R_{T_{F(w)}^F}^{G^F}\left({\bf X}_{K_w,\phi_w}\right).
$$
where  ${\bf X}_{K,\varphi_w}$ and ${\bf X}_{K_w,\phi_w}$ are regarded as functions on $T^{F\circ w}$ and $T_{F(w)}^F$ supported by $1$. Denote by ${\rm Ind}_{\rm uni}^w:\calD_c^b(\B(T_{F(w)}))\rightarrow\calD_c^b([G_{\rm uni}/G])$ the functor defined as ${\rm Ind}_{\rm uni}$ with $T_{F(w)}$ instead of $T$. Then canonically

$$
{\rm Ind}_{\rm uni}^w(K_w,\phi_w)\simeq \left({\rm Ind}_{\rm uni}(K),\theta^G_w\circ\varphi\right)
$$
and so 

$$
{\bf X}_{{\rm Ind}_{\rm uni}^w(K_w,\phi_w)}={\bf X}_{{\rm Ind}_{\rm uni}(K),\theta^G_w\circ\varphi}.
$$
We are thus reduced to prove that

\begin{equation}
R_{T_{F(w)}^F}^{G^F}\left({\bf X}_{K_w,\phi_w}\right)={\bf X}_{{\rm Ind}_{\rm uni}^w(K_w,\phi_w)}.
\label{Lus}\end{equation}
If $K$ is the constant sheaf $\Q$, then on the right hand side this the Green function $Q_{T_{F(w)}^F}^{G^F}$ of Deligne-Lusztig and on the left hand side this is the geometric Green functions defined at first by Springer \cite{Sp} (in the Lie algebra setting) when the characteristic of $k$ is \emph{good} and by Lusztig \cite[Section 3]{LUG} in any characteristic. This formula was first proved by Kazhdan \cite{Ka} when the characteristic is large enough using Springer's definition, and by Lusztig in full generality in \cite{LusGreen} (notice that in this case $K$ is a perverse sheaf on $\B(T)$ and the category of perverse sheaves on $\B(T)$ is equivalent to that on a point and we are thus in the context of Lusztig's paper). 
\bigskip

Denote by $N_{\rm uni}^w$ the characteristic function of the complex $\calN_{\rm uni}^w$ on $\B(T_{F(w)})\times[G_{\rm uni}/G]$ (defined as $\calN_{\rm uni}$ but with $T_{F(w)}$ instead of $T$). Then it follows from (\ref{Lus}) applied to $K=\Q$ that

\begin{align*}
Q_{T_{F(w)}^F}^{G^F}=N_{\rm uni}^w(1,\cdot)
\end{align*}
Going back to an arbitrary complex $K$ we have, for all $x\in G_{\rm uni}^F$

\begin{align*}
{\bf X}_{{\rm Ind}_{\rm uni}^w(K_w,\phi_w)}(x)&=N_{\rm uni}^w(1,x)\, {\bf X}_{K_w,\phi_w}(1)\\
&=Q_{T_{F(w)}^F}^{G^F}(x) \,{\bf X}_{K_w,\phi_w}(1)\\
&=R_{T_{F(w)}^F}^{G^F}\left({\bf X}_{K_w,\phi_w}(1)\cdot 1_{\{1\}}\right)(x)\\
&=R_{T_{F(w)}^F}^{G^F}\left({\bf X}_{K_w,\phi_w}\right)(x).
\end{align*}
\end{proof}

Notice that the characteristic functions of $(\R_{\rm uni}\times\R_{\rm uni})(\Delta_{[G_{\rm uni}/G]\, !}\Q)$ and $\Delta_{\B(N)\, !}\Q$ are functions on $H^1(F,W)\times H^1(F,W)$.

\begin{theorem}Taking the value of the characteristic functions of
\begin{equation}
(\R_{\rm uni}\times\R_{\rm uni})(\Delta_{[G_{\rm uni}/G]\, !}\Q)\simeq\Delta_{\B(N)\, !}\Q
\label{ortho11}\end{equation}
at $([w],[w'])\in H^1(F,W)\times H^1(F,W)$ yields the first orthogonality relation

\begin{equation}
\frac{1}{|G^F|}\sum_{u\in G_{\rm uni}^F}Q_{T_w^F}^{G^F}(u)Q_{T_{w'}^F}^{G^F}(u)=\frac{|W^{wF}|}{|T_w^F|}\, \delta_{[w|,[w']}.
\label{ortho21'}\end{equation}
\end{theorem}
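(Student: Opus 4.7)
The plan is to apply the characteristic-function functor $\mathbf{X}$ to both sides of the isomorphism (\ref{ortho11}) and unfold each side using Theorem \ref{comp}, then simplify the resulting expressions via the explicit formula (\ref{LR}) for the Deligne-Lusztig restriction of unipotently supported functions.

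First I would identify the characteristic function of $\Delta_{[G_{\rm uni}/G]\,!}\Q$. Since $G$ is affine, the diagonal of $[G_{\rm uni}/G]$ is representable by schemes, and its fibre at $(u,v)\in(G^F_{\rm uni})^2$ is the transporter $\{g\in G\mid gug^{-1}=v\}$, which is either empty or a $C_G(u)$-torsor. Lang's theorem then gives $\mathbf{X}_{\Delta_{[G_{\rm uni}/G]\,!}\Q}(u,v)=|C_G(u)^F|$ when $u$ is $G^F$-conjugate to $v$ and $0$ otherwise, i.e.\ exactly the function $\Delta_{\rm uni}^F$ of (\ref{Delta}). Applying Theorem \ref{comp} in each factor and then inserting (\ref{LR}), the characteristic function of the left-hand side of (\ref{ortho11}) at $([w],[w'])$ is
$$(\R^F_{\rm uni}\times\R^F_{\rm uni})(\Delta_{\rm uni}^F)([w],[w'])=\frac{|T_w^F|\,|T_{w'}^F|}{|G^F|^2}\sum_{u,v\in G^F_{\rm uni}}Q^{G^F}_{T_w^F}(u)\,Q^{G^F}_{T_{w'}^F}(v)\,\Delta_{\rm uni}^F(u,v).$$
Using the $G^F$-conjugacy invariance of the Green functions, the double sum collapses (the $G^F$-orbit of $u$ has size $|G^F|/|C_G(u)^F|$), yielding
$$\frac{|T_w^F|\,|T_{w'}^F|}{|G^F|}\sum_{u\in G^F_{\rm uni}}Q^{G^F}_{T_w^F}(u)\,Q^{G^F}_{T_{w'}^F}(u).$$

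Next I would compute $\mathbf{X}_{\Delta_{\B(N)\,!}\Q}([w],[w'])$ directly by the trace formula. The diagonal of $\B(N)$ is representable and its fibre over a pair of $N$-torsors $(P_{[w]},P_{[w']})$ is the Isom-scheme $\mathrm{Isom}(P_{[w]},P_{[w']})$, a right $N$-torsor over $k$ whose $F$-fixed points are empty unless $[w]=[w']$, in which case they are in bijection with $N^{wF}$. Since $T$ is connected, Lang-Steinberg applied to the exact sequence $1\to T\to N\to W\to 1$ gives $1\to T^{wF}\to N^{wF}\to W^{wF}\to 1$, so
$$\mathbf{X}_{\Delta_{\B(N)\,!}\Q}([w],[w'])=|N^{wF}|\,\delta_{[w],[w']}=|T_w^F|\,|W^{wF}|\,\delta_{[w],[w']}.$$

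Equating the two characteristic functions along (\ref{ortho11}) and dividing by the common factor $|T_w^F|\,|T_{w'}^F|/|G^F|$ produces (\ref{ortho21'}) in the case $[w]=[w']$ and the vanishing $0=0$ otherwise. The main delicate point is the stacky trace-formula computation on $\B(N)^2$: one must carefully verify that the fibre of $\Delta_{\B(N)}$ is genuinely the Isom-scheme (with no additional automorphism correction in the groupoid count) and track how the Frobenius twists on $N$-torsors translate into the twisted Frobenius $wF$ acting on $N$. Once this is settled, the remainder is essentially bookkeeping with Theorem \ref{comp}, formula (\ref{LR}), and the orthogonality of characteristic functions of conjugacy classes.
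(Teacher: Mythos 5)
Your proof is correct and follows essentially the same approach as the paper: both compute the characteristic functions of the two sides via Theorem \ref{comp}, use formula (\ref{LR}) (applied to $G\times G$) to produce the double sum, collapse it by orbit-counting, and finish with the identification $|N^{wF}|=|T_w^F|\,|W^{wF}|$ coming from the exact sequence $1\to T\to N\to W\to 1$. You supply slightly more explicit justification for the two stacky trace computations (via the transporter scheme and the Isom-scheme argument), but these are the same facts the paper invokes.
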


\begin{proof} The characteristic function of $\Delta_{[G_{\rm uni}/G]\, !}\Q$ is the function $\Delta_{\rm uni}^F:G_{\rm uni}^F\times G_{\rm uni}^F\rightarrow\Q$ defined by

$$
\Delta_{\rm uni}^F(u,v)=\begin{cases}|C_G(u)^F|&\text{ if } u \text{ if } G^F-\text{conjugate to }v,\\0&\text{ otherwise.}\end{cases}
$$
The characteristic function of $\Delta_{\B(N)\,!}\Q$ is the function $\Delta_{\B(N)^F\, !}(1)$ where 

$$
\Delta_{\B(N)^F}:\B(N)^F\rightarrow\B(N)^F\times\B(N)^F
$$
is the diagonal map. We have, for $[n],[n']\in H^1(F,N)$,

\begin{equation}
\Delta_{\B(N)^F\, !}(1)([n],[n'])=\begin{cases}|N^{F\circ n}|&\text{ if }[n]=[n'],\\
0&\text{ otherwise.}\end{cases}
\label{!}\end{equation}
From Theorem \ref{comp} we are therefore reduced to prove that

$$
(\R_{\rm uni}^F\times \R_{\rm uni}^F)(\Delta_{\rm uni}^F)([n],[n'])=\frac{|T^{F\circ n}|\, |T^{F\circ n'}|}{|G^F|}\sum_{u\in G_{\rm uni}^F}Q_{T_{F(w)}^F}^{G^F}(u)Q_{T_{F(w')}^F}^{G^F}(u)
$$
where $w$ and $w'$ are the images of $n$ and $n'$ in $W$ (notice that $T^{F\circ n}\simeq T_{F(w)}^F$ and so $Q^{G^F}_{T^{F\circ n}}=Q_{T_{F(w)}^F}^{G^F}$).

We have 

$$
(\R^F_{\rm uni}\times \R^F_{\rm uni})(\Delta_{\rm uni}^F)([n],[n'])=\left({^*}R^{G^F}_{T^{F\circ n}}\times {^*}R^{G^F}_{T^{F\circ n'}}\right)(\Delta_{\rm uni}^F)(1,1)
$$
where by notation abuse we denote by $\Delta_{\rm uni}^F:G^F\times G^F\rightarrow\Q$ the extension by $0$ of $\Delta_{\rm uni}^F:G_{\rm uni}^F\times G_{\rm uni}^F\rightarrow\Q$. 

It follows from Formula (\ref{LR}) applied to the group $G\times G$ instead of $G$, that

\begin{align*}
\left({^*}R^{G^F}_{T^{F\circ n}}\times {^*}R^{G^F}_{T^{F\circ n'}}\right)(\Delta_{\rm uni}^F)(1,1)&=\frac{|T^{F\circ n}|\,|T^{F\circ n'}|}{|G^F|^2}\sum_{u,v\in G_{\rm uni}^F}Q_{T_{F(w)}^F}^{G^F}(u)Q_{T_{F(w')}^F}^{G^F}(v)\,\Delta(u,v)\\
&=\frac{|T^{F\circ n}|\,|T^{F\circ n'}|}{|G^F|}\sum_{u\in G_{\rm uni}^F}Q_{T_{F(w)}^F}^{G^F}(u)Q_{T_{F(w')}^F}^{G^F}(u).
\end{align*}
We conclude by noticing that $|N^{F\circ n}|=|T_{F(w)}^F|\, |W^{F(w)F}|$ and $|T^{F\circ n}|=|T_{F(w)}^F|$.
\end{proof}

Let us now discuss the second orthogonality formula.

\begin{theorem}Put $\P_{\rm uni}:=\I_{\rm uni}\circ\R_{\rm uni}$. Taking the value of characteristic functions of

\begin{equation}
 (\I_{\rm uni}\times\I_{\rm uni})(\Delta_{\B(N)\, !}\Q)\simeq(\P_{\rm uni}\times\P_{\rm uni})(\Delta_{[G_{\rm uni}/G]\, !}\Q)
\label{ortho22}\end{equation}
at $(u,v)\in G_{\rm uni}^F\times G_{\rm uni}^F$, yields the second orthogonality formula

\begin{equation}
\frac{1}{|W|}\sum_{w\in W}|T_w^F|\,Q_{T_w^F}^{G^F}(u)\,Q_{T_w^F}^{G^F}(v)=(\P^F_{\rm uni}\times \P^F_{\rm uni})(\Delta_{\rm uni}^F)(u,v)
\label{ortho22'}\end{equation}
where $\P^F_{\rm uni}:=\I_{\rm uni}^F\circ\R_{\rm uni}^F$.
\end{theorem}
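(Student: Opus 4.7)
The plan is to apply the characteristic-function functor ${\bf X}$ to both sides of the isomorphism (\ref{ortho22}) and invoke Theorem \ref{comp} (together with the compatibility of ${\bf X}$ with external products and with proper pushforwards of $F$-equivariant complexes) in order to obtain the identity of functions on $G_{\rm uni}^F\times G_{\rm uni}^F$:
\begin{equation*}
(\I_{\rm uni}^F\times\I_{\rm uni}^F)\bigl({\bf X}[\Delta_{\B(N)\,!}\Q]\bigr)(u,v)=(\P_{\rm uni}^F\times\P_{\rm uni}^F)\bigl({\bf X}[\Delta_{[G_{\rm uni}/G]\,!}\Q]\bigr)(u,v).
\end{equation*}
The right-hand side already equals $(\P_{\rm uni}^F\times\P_{\rm uni}^F)(\Delta_{\rm uni}^F)(u,v)$, since the characteristic function of $\Delta_{[G_{\rm uni}/G]\,!}\Q$ was identified with $\Delta_{\rm uni}^F$ in the proof of the preceding theorem.

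The substantive step is to match the left-hand side with $\frac{1}{|W|}\sum_{w\in W}|T_w^F|Q_{T_w^F}^{G^F}(u)Q_{T_w^F}^{G^F}(v)$. I would substitute the formula (\ref{!}), namely
\begin{equation*}
{\bf X}[\Delta_{\B(N)\,!}\Q]=\sum_{[n]\in H^1(F,N)}|N^{F\circ n}|\,1_{[n]}\boxtimes 1_{[n]},
\end{equation*}
to rewrite the left-hand side as $\sum_{[n]}|N^{F\circ n}|\,\I_{\rm uni}^F(1_{[n]})(u)\,\I_{\rm uni}^F(1_{[n]})(v)$. Next, unwinding the definition of $\I^F$ from \S\ref{DLu} (where $1_{[n]}$ is extended from $\B(N)^F$ to $[T/N]^F$ as the delta function at $1\in T^{F\circ w}$ in the $[w]$-component) and using the character-formula identity $R_{T^{F\circ w}}^{G^F}(\theta)(u)=\theta(1)\,Q_{T_{F(w)}^F}^{G^F}(u)$ for unipotent $u$, which follows formally from (\ref{LR}) by adjunction between $R$ and ${}^*R$, one finds
\begin{equation*}
\I_{\rm uni}^F(1_{[n]})=\frac{1}{|W^{F\circ w}|}\,Q_{T_{F(w)}^F}^{G^F},
\end{equation*}
where $w$ denotes the image of $n$ in $W$.

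Substituting these ingredients and using $|N^{F\circ n}|=|T_{F(w)}^F|\cdot|W^{F\circ w}|$, the left-hand side simplifies to $\sum_{[w]\in H^1(F,W)}\frac{|T_{F(w)}^F|}{|W^{F\circ w}|}Q_{T_{F(w)}^F}^{G^F}(u)Q_{T_{F(w)}^F}^{G^F}(v)$, which by the orbit-stabilizer formula $\sum_{[w]\in H^1(F,W)}\frac{g(w)}{|W^{F\circ w}|}=\frac{1}{|W|}\sum_{w\in W}g(w)$ (valid for any $F$-conjugation invariant $g$) and the reindexing bijection $w\mapsto F(w)$ of $H^1(F,W)$ equals the desired $\frac{1}{|W|}\sum_{w\in W}|T_w^F|Q_{T_w^F}^{G^F}(u)Q_{T_w^F}^{G^F}(v)$. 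The main technical obstacle is the careful bookkeeping of the normalization factors $|N^{F\circ n}|$, $|W^{F\circ w}|$ and $|T^{F\circ n}|$, together with the passage from sums indexed by $H^1(F,W)$ to sums indexed by $W$; modulo this accounting, the second orthogonality relation is essentially forced by Theorem \ref{comp} together with the isomorphism (\ref{ortho22}), which itself was derived by applying $\I_{\rm uni}\times\I_{\rm uni}$ to the geometric form (\ref{geoortho1}) of the first orthogonality relation.
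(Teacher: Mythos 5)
Your proposal is correct and follows essentially the same approach as the paper: apply Theorem \ref{comp} to both sides of (\ref{ortho22}), substitute formula (\ref{!}) for $\Delta_{\B(N)^F\,!}(1)$, unwind $\I^F_{\rm uni}$ via the Deligne--Lusztig character formula on unipotent elements, and carry out the normalization bookkeeping with $|N^{F\circ n}|=|T^{F\circ n}|\cdot|W^{F\circ w}|$ and the orbit--stabilizer reindexing from $H^1(F,W)$ to $W$. In particular, your value $\I_{\rm uni}^F(1_{[n]})=\frac{1}{|W^{F\circ w}|}\,Q_{T_{F(w)}^F}^{G^F}$ is exactly the normalization the paper's computation relies on.
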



\begin{proof}First of all, by Theorem \ref{comp}, we see that the characteristic function of the right hand side of (\ref{ortho22}) is the right hand side of (\ref{ortho22'}). 
\bigskip

By Theorem \ref{comp}, the characteristic function of the right hand side of (\ref{ortho21'}) is the function

$$
(\I_{\rm uni}^F\times\I_{\rm uni}^F)(\Delta_{\B(N)^F\, !}(1)).
$$
By (\ref{!}), the function $\Delta_{\B(N)^F\, !}(1)\in\calC(\B(N)^F\times\B(N)^F)=\bigoplus_{[n],[n']\in H^1(F,N)}\Q$ decomposes as
$$
\Delta_{\B(N)^F\, !}(1)=\sum_{[n]\in H^1(F,N)}|N^{F\circ n}|.
$$

For $(u,v)\in G_{\rm uni}^F\times G_{\rm uni}^F$ we have
$$
 (\I_{\rm uni}^F\times\I_{\rm uni}^F)(\Delta_{\B(N)^F\,!}(1))(u,v)=\sum_{[w]\in H^1(F,W)}\frac{|N^{F\circ n}|}{|W^{F\circ w}|\, |W^{F\circ w'}|}\left(R_{T_{F(w)}^F}^{G^F}\times R_{T_{F(w)}^F}^{G^F}\right)(1_{\{1\}})(u,v)
$$
where $1_{\{1\}}$ is the function on $T_{F(w)}^F$ that takes the value $1$ at $1$ and $0$ elsewhere and where $n$ is a representative of $[w]$ in $N$.

Therefore

\begin{align*}
 (\I_{\rm uni}^F\times\I_{\rm uni}^F)(\Delta_{\B(N)^F\,!}(1))(u,v)&=\sum_{[w]\in H^1(F,W)}\frac{|N^{F\circ n}|}{|W^{F\circ w}|\, |W^{F\circ w}|}Q_{T_{F(w)}^F}^{G^F}(u)\, Q_{T_{F(w)}^F}^{G^F}(v)\\
 &=\sum_{[w]\in H^1(F,W)}\frac{|T^{F\circ n}|}{|W^{F\circ w}|}Q_{T_{F(w)}^F}^{G^F}(u)\, Q_{T_{F(w)}^F}^{G^F}(v)\\
 &=\sum_{[w]\in H^1(F,W)}\frac{|T_w^F|}{|W^{wF}|}Q_{T_w^F}^{G^F}(u)\, Q_{T_w^F}^{G^F}(v)\\
 &=\frac{1}{|W|}\sum_{w\in W}|T_w^F|Q_{T_w^F}^{G^F}(u)\, Q_{T_w^F}^{G^F}(v).
\end{align*}
\end{proof}

We can see that the isomorphism (\ref{ortho11}) and (\ref{ortho22}) are equivalent using the functors $\I_{\rm uni}\times\I_{\rm uni}$ and $\R_{\rm uni}\times\R_{\rm uni}$. Hence we deduce the following result.

\begin{corollary}The two orthogonality formulas (\ref{ortho21'}) and (\ref{ortho22'}) are equivalent.
\end{corollary}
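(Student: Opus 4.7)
The plan is to combine three ingredients already in place: the translation of the two orthogonality formulas into the geometric isomorphisms (\ref{ortho11}) and (\ref{ortho22}) via characteristic functions (the two preceding theorems); the adjunction identity $\R_{\rm uni}\circ\I_{\rm uni}\simeq 1$; and the compatibility Theorem \ref{comp} relating the derived functors $\I_{\rm uni},\R_{\rm uni}$ to their function-level counterparts $\I^F_{\rm uni},\R^F_{\rm uni}$.

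First I would verify the geometric equivalence of (\ref{ortho11}) and (\ref{ortho22}). Applying $\I_{\rm uni}\times\I_{\rm uni}$ to both sides of (\ref{ortho11}) rewrites the left-hand side as $(\P_{\rm uni}\times\P_{\rm uni})(\Delta_{[G_{\rm uni}/G]\,!}\Q)$ and the right-hand side as $(\I_{\rm uni}\times\I_{\rm uni})(\Delta_{\B(N)\,!}\Q)$, which is exactly (\ref{ortho22}). In the other direction I would apply $\R_{\rm uni}\times\R_{\rm uni}$ to (\ref{ortho22}): the adjunction $\R_{\rm uni}\circ\I_{\rm uni}\simeq 1$ collapses the left-hand side to $\Delta_{\B(N)\,!}\Q$, while the identity $\R_{\rm uni}\circ\P_{\rm uni}=\R_{\rm uni}\circ\I_{\rm uni}\circ\R_{\rm uni}\simeq\R_{\rm uni}$ collapses the right-hand side to $(\R_{\rm uni}\times\R_{\rm uni})(\Delta_{[G_{\rm uni}/G]\,!}\Q)$, recovering (\ref{ortho11}).

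Finally I would transport this equivalence to the function side. By the two preceding theorems, (\ref{ortho21'}) and (\ref{ortho22'}) are precisely the characteristic-function shadows of (\ref{ortho11}) and (\ref{ortho22}); and by Theorem \ref{comp}, ${\bf X}$ intertwines $\I_{\rm uni}$ and $\R_{\rm uni}$ with $\I^F_{\rm uni}$ and $\R^F_{\rm uni}$. The functorial manipulations of the previous paragraph therefore descend verbatim: applying $\I^F_{\rm uni}\times\I^F_{\rm uni}$ to (\ref{ortho21'}) yields (\ref{ortho22'}), and applying $\R^F_{\rm uni}\times\R^F_{\rm uni}$ to (\ref{ortho22'}) together with the identity $\R^F_{\rm uni}\circ\I^F_{\rm uni}=\id$ (the unipotent restriction of $\R^F\circ\I^F=\id$ on $\calC([T/N]^F)$ recalled in Section \ref{DLu}) recovers (\ref{ortho21'}). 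No real obstacle remains; all substantive computations have been carried out in the two preceding theorems, and the corollary is a purely formal consequence of them together with the adjunction isomorphism.
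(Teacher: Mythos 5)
Your proposal is correct and follows essentially the same route the paper has in mind: the paper states the corollary as a direct consequence of the one-sentence observation preceding it ("We can see that the isomorphism (\ref{ortho11}) and (\ref{ortho22}) are equivalent using the functors $\I_{\rm uni}\times\I_{\rm uni}$ and $\R_{\rm uni}\times\R_{\rm uni}$"), together with the earlier remark in \S 3.1 that one recovers the first formula from the second by applying $\R_{\rm uni}\times\R_{\rm uni}$ and using $\R_{\rm uni}\circ\I_{\rm uni}\simeq 1$. You simply spell this out in full, and in addition you note that the same manipulations can be run purely at the level of functions via $\I^F_{\rm uni},\R^F_{\rm uni}$ and $\R^F_{\rm uni}\circ\I^F_{\rm uni}=\id$, which is a worthwhile clarification but not a different argument.
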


\end{document}